\documentclass[12pt,a4paper]{article}
\usepackage{amssymb,amsmath,amsthm}
\usepackage{graphicx}

\newcommand\cC{{\mathcal C}}

\newcommand\cF{{\mathcal F}}

\newcommand{\abs}[1]{\left\lvert{#1}\right\rvert}

\newcommand\bC{\mathbf C}
\newcommand\cG{{\mathcal G}}

\newcommand\cQ{{\mathcal Q}}

\theoremstyle{plain}
\newtheorem{theorem}{Theorem}[section]

\newtheorem{conjecture}[theorem]{Conjecture}

\theoremstyle{definition}

\newtheorem{claim}[theorem]{Claim}

\newtheorem*{remark}{Remark}

\newcommand\tref[1]{Theorem~\ref{thm:#1}}
\newcommand\cref[1]{Corollary~\ref{cor:#1}}


\textheight=8in \textwidth=6.8in \topmargin=0.3in \oddsidemargin=-0.2in
\evensidemargin=0in
\title{Forbidding rank-preserving copies of a poset}
\author{D\'aniel Gerbner$^{a,}$\thanks{Research supported by the J\'anos Bolyai Research Fellowship of the Hungarian Academy of Sciences and the National Research, Development and Innovation Office -- NKFIH under the grant K 116769.}, Abhishek Methuku$^b$, D\'aniel T. Nagy$^{a,}$\thanks{Research supported by the \'{U}NKP-17-3 New National Excellence Program of the Ministry of Human Capacities and by National Research, Development and Innovation Office - NKFIH under the grant K 116769.}, \\ Bal\'azs Patk\'os$^{a,}$\thanks{Research supported by the National Research, Development and Innovation Office -- NKFIH under the grants SNN 116095 and K 116769.}, M\'at\'e Vizer$^{a,}$\thanks{Research supported by the National Research, Development and Innovation Office -- NKFIH under the grant SNN 116095.} \\
\small $^a$ Alfr\'ed R\'enyi Institute of Mathematics, Hungarian Academy of Sciences\\
\small P.O.B. 127, Budapest H-1364, Hungary.\\
\small $^b$ Central European University, Department of Mathematics\\
\small Budapest, H-1051, N\'ador utca 9.\\
\medskip
\small \texttt{\{gerbner,nagydani,patkos\}@renyi.hu, \{abhishekmethuku,vizermate\}@gmail.com}
\medskip}

\begin{document}
\maketitle

\begin{abstract}

The maximum size, $La(n,P)$, of a family of subsets of $[n]=\{1,2,...,n\}$ without containing a copy of $P$ as a subposet, has been intensively studied. 

Let $P$ be a graded poset. We say that a family $\cF$ of subsets of $[n]=\{1,2,...,n\}$ contains a \emph{rank-preserving} copy of $P$ if it contains a copy of $P$ such that elements of $P$ having the same rank are mapped to sets of same size in $\cF$. The largest size of a family of subsets of $[n]=\{1,2,...,n\}$  without containing a rank-preserving copy of $P$ as a subposet is denoted by $La_{rp}(n,P)$. Clearly, $La(n,P) \le La_{rp}(n,P)$ holds.

In this paper we prove asymptotically optimal upper bounds on $La_{rp}(n,P)$ for tree posets of height $2$ and monotone tree posets of height $3$, strengthening a result of Bukh in these cases. We also obtain the exact value of $La_{rp}(n,\{Y_{h,s},Y_{h,s}'\})$ and $La(n,\{Y_{h,s},Y_{h,s}'\})$, where  $Y_{h,s}$ denotes the poset on $h+s$ elements $x_1,\dots,x_h,y_1,\dots,y_s$ with $x_1<\dots<x_h<y_1,\dots,y_s$ and $Y'_{h,s}$ denotes the dual poset of $Y_{h,s}$.

\end{abstract}

\section{Introduction}
In extremal set theory, many of the problems considered can be phrased in the following way: what is the size of the largest family of sets that satisfy a certain property. The very first such result is due to Sperner \cite{sperner} which states that if $\cF$ is a family of subsets of $[n]=\{1,2\dots,n\}$ (we write $\cF\subseteq 2^{[n]}$ to denote this fact) such that no pair $F,F'\in \cF$ of sets are in inclusion $F\subsetneq F'$, then $\cF$ can contain at most $\binom{n}{\lfloor n/2\rfloor}$ sets. This is sharp as shown by $\binom{[n]}{\lfloor n/2\rfloor}$ (the family of all $k$-element subsets of a set $X$ is denoted by $\binom{X}{k}$ and is called the $k^{th}$ \textit{layer} of $X$). If $P$ is a poset, we denote by $\le_P$ the partial order acting on the elements of $P$. Generalizing Sperner's result, Katona and Tarj\'an \cite{KatTar} introduced the problem of determining the size of the largest family $\cF\subseteq 2^{[n]}$ that does not contain sets satisfying some inclusion patterns. Formally, if $P$ is a finite poset, then a subfamily $\cG\subseteq \cF$ is
\begin{itemize}
\item
a \textit{(weak) copy} of $P$ if there exists a bijection $\phi: P \rightarrow \cG$ such that we have $\phi(x)\subseteq \phi(y)$ whenever $x \le_P y$ holds,
\item
a \textit{strong} or \textit{induced copy} of $P$ if there exists a bijection $\phi: P \rightarrow \cG$ such that we have $\phi(x)\subseteq \phi(y)$ if and only if $x \le_P y$ holds.
\end{itemize}
A family is said to be \textit{$P$-free} if it does not contain any (weak) copy of $P$ and \textit{induced $P$-free} if it does not contain any induced copy of $P$. Katona and Tarj\'an started the investigation of determining
\[
La(n,P):=\max\{|\cF|:\cF\subseteq 2^{[n]}, ~\text{$\cF$ is $P$-free}\} 
\]
and
\[ 
La^*(n,P):=\max\{|\cF|:\cF\subseteq 2^{[n]}, ~\text{$\cF$ is induced $P$-free}\}.
\]
The above quantities have been determined precisely or asymptotically for many classes of posets (see \cite{griggsli} for a nice survey), but the question has not been settled in general. Recently, Methuku and P\'alv\"olgyi \cite{MetPal} showed that for any poset $P$, there exists a constant $C_P$ such that $La(n,P)\le La^*(n,P)\le C_P\binom{n}{\lfloor n/2\rfloor}$ holds (the inequality $La(n,P)\le |P|\binom{n}{\lfloor n/2\rfloor}$ follows trivially from a result of Erd\H os \cite{erdos}). However, it is still unknown whether the limits $\pi(P)=\lim_{n \rightarrow \infty}\frac{La(n,P)}{\binom{n}{\lfloor n/2\rfloor}}$ and $\pi^*(P)=\lim_{n \rightarrow \infty}\frac{La^*(n,P)}{\binom{n}{\lfloor n/2\rfloor}}$ exist. In all known cases, the asymptotics of $La(n,P)$ and $La^*(n,P)$ were given by ``taking as many middle layers as possible without creating an (induced) copy of $P$". Therefore researchers of the area believe the following conjecture that appeared first in print in \cite{griggslu}.

\begin{conjecture}\label{conj}
(i) For any poset $P$ let $e(P)$ denote the largest integer $k$ such that for any $j$ and $n$ the family $\cup_{i=1}^k\binom{[n]}{j+i}$ is $P$-free. Then $\pi(P)$ exists and is equal to $e(P)$.

(ii) For any poset $P$ let $e^*(P)$ denote the largest integer $k$ such that for any $j$ and $n$ the family $\cup_{i=1}^k\binom{[n]}{j+i}$ is induced $P$-free. Then $\pi^*(P)$ exists and is equal to $e^*(P)$.
\end{conjecture}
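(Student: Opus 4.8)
The plan is to treat each part as a lower bound plus an upper bound, and to record that the lower bound is immediate while the upper bound is the whole problem. For (i), the definition of $e(P)$ says that $\cup_{i=1}^{e(P)}\binom{[n]}{j+i}$ is $P$-free for every $j$; taking these to be the $e(P)$ layers closest to the middle, each has size $(1-o(1))\binom{n}{\lfloor n/2\rfloor}$, so $La(n,P)\ge (e(P)-o(1))\binom{n}{\lfloor n/2\rfloor}$ and hence $\liminf_n La(n,P)/\binom{n}{\lfloor n/2\rfloor}\ge e(P)$. The same construction with $e^*(P)$ in place of $e(P)$ gives the lower bound for (ii). Thus everything reduces to proving the matching upper bounds, namely $\limsup_n La(n,P)/\binom{n}{\lfloor n/2\rfloor}\le e(P)$ together with the starred analogue, and to showing the two limits exist at all.

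For the upper bound I would run the chain-counting (Lubell) method: decompose $2^{[n]}$ into its maximal chains $\emptyset=C_0\subsetneq\cdots\subsetneq C_n=[n]$ and, using that each $k$-set lies on $k!(n-k)!$ of them, rewrite $\sum_{F\in\cF}\binom{n}{|F|}^{-1}$ as the expected number of sets of $\cF$ met by a uniformly random maximal chain. The crux is then an entirely combinatorial claim about the poset: if $\cF$ is $P$-free then, after discarding a negligible fraction of sets near the top and bottom, a typical chain meets $\cF$ in at most $e(P)+o(1)$ levels, so the average is at most $e(P)+o(1)$ and the bound follows by summing. To establish such a claim for a concrete $P$ one argues by contradiction: if too many consecutive middle levels were each densely occupied, a greedy level-by-level embedding would produce a copy of $P$, exploiting that $P$ can be built up rank by rank once enough comparable sets are available at each level; this is exactly the scheme that succeeds for paths, for height-$2$ tree posets, and for the posets $Y_{h,s}$ studied in this paper. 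For the induced version (ii) the embedding must be upgraded so that the image realises precisely the comparabilities of $P$ and no others, which forces one to control incomparabilities as well, and this is where the starred statement is genuinely harder.

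The main obstacle is that no such embedding argument is known for an arbitrary poset $P$, which is precisely why the statement is a conjecture rather than a theorem. The chain method applied crudely only recovers the weak bound $La(n,P)\le |P|\binom{n}{\lfloor n/2\rfloor}$ of Erd\H os, and sharpening the constant all the way down to $e(P)$ seems to require understanding, for each $P$ separately, how its structure forces a $P$-free family to be sparse on chains; a $P$-free family need not be a union of layers, so one cannot simply compare with the extremal construction. Even the existence of $\pi(P)$ is unclear, as there is no obvious sub- or supermultiplicativity in $n$ to feed into Fekete's lemma. Consequently my realistic target would not be the full conjecture but the special cases accessible to the greedy-embedding scheme above; the general poset is the genuine sticking point.
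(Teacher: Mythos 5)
You have correctly recognized that this statement is a conjecture (originating with Griggs and Lu, as the paper notes), not a theorem of the paper: the paper states it purely as background motivation and proves no part of it in general, so there is no proof of record to compare your attempt against. The parts of your proposal that are actual proofs are correct. The lower bound is exactly the standard observation: by the definition of $e(P)$, the $e(P)$ layers closest to the middle form a $P$-free family, each of size $(1-o(1))\binom{n}{\lfloor n/2\rfloor}$, so $\liminf_n La(n,P)/\binom{n}{\lfloor n/2\rfloor}\ge e(P)$, and likewise for $e^*(P)$ in the induced setting. Your assessment of the rest is also accurate: the matching upper bound, and even the existence of the limits $\pi(P)$ and $\pi^*(P)$, are open in general, and the crude chain-counting bound only yields $La(n,P)\le |P|\binom{n}{\lfloor n/2\rfloor}$ (Erd\H{o}s).

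It is worth noting that the scheme you sketch for the upper bound --- Lubell/chain counting combined with a greedy, rank-by-rank embedding of $P$ --- is precisely what the paper's actual theorems implement in special cases: Theorem 1.2 (height-$2$ tree posets) uses Kleitman's $2$-chain count plus a greedy tree embedding in an auxiliary bipartite graph, Theorem 1.3 (monotone height-$3$ tree posets) uses a chain-partition refinement of the Lubell method, and Theorem 1.4 ($\{Y_{h,s},Y'_{h,s}\}$) uses a weighted chain argument; all three are proved in the stronger rank-preserving sense, and Bukh's theorem cited in the paper verifies part (i) of the conjecture for all tree posets. So your proposal is an honest and essentially correct account of the state of the problem: the lower bound is established, the upper bound is the conjecture, and neither you nor the paper proves it.
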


Let $P$ be a graded poset with rank function $\rho$. Given a family $\cF$, a subfamily $\cG\subseteq \cF$ is a rank-preserving copy of $P$ if $\cG$ is a (weak) copy of $P$ such that elements having the same rank in $P$ are mapped to sets of same size in $\cG$. More formally, $\cG\subseteq \cF$ is a rank-preserving copy of $P$ if there is a bijection $\phi: P \rightarrow \cG$ such that $|\phi(x)| = |\phi(y)|$ whenever $\rho(x) = \rho(y)$ and we have $\phi(x)\subseteq \phi(y)$ whenever $x \le_P y$ holds. A family $\cF$ is rank-preserving $P$-free if it does not contain a rank-preserving copy of $P$. In this paper, we study the function
\[
La_{rp}(n,P):=\max\{|\cF|:\cF\subseteq 2^{[n]}, ~\text{$\cF$ is rank-preserving $P$-free}\}. 
\]

In fact, our problem is a natural special case of the following general problem introduced by Nagy \cite{Nagy}. Let $c: P\rightarrow [k]$ be a coloring of the poset $P$ such that for any $x\in [k]$ the pre-image $c^{-1}(x)$ is an antichain. A subfamily $\cG\subseteq \cF$ is called a \textit{$c$-colored copy} of $P$ in $\cF$ if $\cG$ is a (weak) copy of $P$ and sets corresponding to elements of $P$ of the same color have the same size. Nagy investigated the size of the largest family $\cF\subseteq 2^{[n]}$ which does not contain a $c$-colored copy of $P$, for several posets $P$ and colorings $c$. Note that when $c$ is the rank function of $P$, then this is equal to $La_{rp}(n,P)$. Nagy also showed that there is a constant $C_P$ such that $La_{rp}(n,P) \le C_P \binom{n}{\lfloor n/2\rfloor}.$

A complete multi-level poset is a poset in which every element of a level is related to every element of another level. Note that any rank-preserving copy of a complete multi-level poset $P$ is also an induced copy of $P$. In fact, in \cite{Patkos}, Patk\'os determined the asymptotics of $La^*(n,P)$,  for some complete multi-level posets $P$ by finding a rank preserving copy of $P$. 

By definition, for every graded poset $P$ we have $La(n,P) \le La_{rp}(n,P)$. 
Boehnlein and Jiang \cite{boehnlein2012set} gave a family of posets $P$ showing that the difference between $La^*(n, P)$ and $La(n, P)$ can be arbitrarily large. Since their posets embed into a complete multi-level poset of height $3$ in a rank-preserving manner, the above mentioned result of Patk\'os implies that for the same family of posets, $La_{rp}(n, P)$ can be arbitrarily smaller than $La^*(n, P)$. However, it would be interesting to determine if the opposite phenomenon can occur.


\subsection{Our results}

\subsubsection*{Asymptotic results}
For a poset $P$ its \textit{Hasse diagram}, denoted
by $H(P)$, is a graph whose vertices are elements of $P$, and $xy$ is an edge if $x < y$ and there is no other element $z$ of $P$ with $x < z < y$. We call a poset, \textit{tree poset} if $H(P)$ is a tree. A tree poset is called \emph{monotone increasing} if it has a unique minimal element and it is called \emph{monotone decreasing} if it has a unique maximal element. A tree poset is \emph{monotone} if it is either monotone increasing or decreasing.


A remarkable result concerning Conjecture \ref{conj} is that of Bukh \cite{Bukh}, who verified Conjecture \ref{conj} (i) for tree posets. In the following results we strengthen his result in two cases.

\begin{theorem}\label{thm:height2}
Let $T$ be any tree poset of height $2$. Then we have $$La_{rp}(n,T)= \left(1 + O_T\left (\sqrt{\frac{\log n}{n}}\right)\right) \binom{n}{\lfloor \frac{n}{2} \rfloor}.$$
\end{theorem}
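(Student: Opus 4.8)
The lower bound is immediate: the middle layer $\binom{[n]}{\lfloor n/2\rfloor}$ is an antichain, hence contains no rank-preserving copy of any height-$2$ poset, so $La_{rp}(n,T)\ge\binom{n}{\lfloor n/2\rfloor}$. All the work is in the matching upper bound, and my plan rests on two ingredients: a \emph{degeneracy} bound for the containment graphs that uses the tree structure, and a Kleitman-type lower bound on the number of comparable pairs. Write $m=\lfloor n/2\rfloor$, $M=\binom nm$, and $f_s=|\cF\cap\binom{[n]}s|$. For sizes $s<t$ let $G_{s,t}$ be the bipartite graph with classes $\cF\cap\binom{[n]}s$ and $\cF\cap\binom{[n]}t$ and an edge for each containment. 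Since $T$ has height $2$, a rank-preserving copy of $T$ is exactly an embedding of the (bipartite) tree $H(T)$ into some $G_{s,t}$ sending the bottom class of $T$ into $\binom{[n]}s$ and the top class into $\binom{[n]}t$, so $\cF$ is rank-preserving $T$-free precisely when no such embedding exists for any $s<t$.

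The first, and to my mind central, step is the following \emph{degeneracy lemma}: if $\cF$ is rank-preserving $T$-free then $e(G_{s,t})\le |T|\,(f_s+f_t)$ for all $s<t$. I would prove this by a greedy embedding. If some subgraph of $G_{s,t}$ had minimum degree $\ge|T|$ on both sides, then building $T$ leaf by leaf along a BFS order from any root — and noting that at each step fewer than $|T|$ vertices have been used, so the image of the current vertex, having at least $|T|$ neighbours all on the correct (opposite) side, always has an unused neighbour — one embeds $T$ respecting the bipartition; as the edges of $G_{s,t}$ are containments, this is a rank-preserving copy of $T$, a contradiction. Hence every subgraph of $G_{s,t}$ has a vertex of degree $<|T|$, so repeatedly deleting such vertices empties the graph and bounds its edges by $|T|(f_s+f_t)$. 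I regard this as the key point: it is exactly because $T$ is a tree that the forbidden bipartite graph has bounded degeneracy, yielding a bound \emph{linear} in $f_s+f_t$. For a poset containing a cycle (an induced $K_{2,2}$, say) no such linear bound holds, which is why the method — like Bukh's — is special to trees.

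To assemble these into the stated bound I would first restrict to a central band $B=\{s:|s-m|\le R\}$ with $R=\lceil\sqrt{n\log n}\rceil$. By Chernoff, $\sum_{s\notin B}\binom ns\le 2^{n+1}e^{-2R^2/n}=O(Mn^{-3/2})=O\big(M\sqrt{\log n/n}\big)$, so the sets of $\cF$ lying outside $B$ are already within the error budget and it suffices to bound $|\cF_B|:=\sum_{s\in B}f_s$. Summing the degeneracy lemma over the pairs inside $B$, and using that each layer count $f_u$ occurs in at most $2R$ such pairs, the number $P$ of comparable pairs among the sets of $\cF_B$ satisfies $P\le |T|\cdot 2R\cdot|\cF_B|$. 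On the other hand, by Kleitman's theorem on the minimum number of comparable pairs (the minimiser being the most central family, for which each set beyond the $M$-th forces $\Theta(n)$ comparabilities — or by a direct normalized-matching argument), any family of size $M+e$ contains at least $c\,n\,e$ comparable pairs for an absolute constant $c>0$. Applying this to $\cF_B$ with $e=|\cF_B|-M$ gives $c\,n\,e\le P\le 2R|T|\,(M+e)$; since $2R|T|=o(n)$ this rearranges to $e\le\frac{2R|T|}{\,cn-2R|T|\,}M=O_T(R/n)\,M=O_T\big(\sqrt{\log n/n}\big)M$. Combining the band with the tail estimate yields $|\cF|\le\big(1+O_T(\sqrt{\log n/n})\big)M$, as required.

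The step I expect to be the main obstacle is the quantitative comparable-pairs bound in exactly the form ``$\ge c\,n\,e$'', valid uniformly for excesses $e$ up to a constant multiple of $M$: the naive symmetric-chain estimate only gives $\ge e$, losing the crucial factor of $n$, and it is precisely this factor, weighed against the band width $2R=\Theta(\sqrt{n\log n})$, that produces the error term $\sqrt{\log n/n}$ rather than something weaker. Making this bound robust (and checking that the comparabilities charged to excess sets genuinely lie within the band) is where I would expect the real care to be needed; by contrast the greedy embedding, once the bipartition bookkeeping is set up correctly, is routine, and the tail and final arithmetic are elementary.
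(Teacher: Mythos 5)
Your proof is correct and follows essentially the same route as the paper's: a Chernoff reduction to a central band of width $O(\sqrt{n\log n})$, a greedy-embedding/degeneracy lemma showing each bipartite containment graph $G_{s,t}$ has at most $O_T(f_s+f_t)$ edges, and Kleitman's lower bound of order $n\cdot\left(|\cF|-\binom{n}{\lfloor n/2\rfloor}\right)$ on the number of comparable pairs, combined by summing over level pairs within the band. The paper's Claim 2.1 is exactly your degeneracy lemma (with constant $t-2$ in place of $|T|$), and the remaining bookkeeping and final rearrangement are identical.
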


\begin{theorem}\label{thm:height3}
Let $T$ be any monotone tree poset of height 3. Then we have $$La_{rp}(n,T)=\left(2+O_T\left(\sqrt{\frac{\log n}{n}}\right)\right)\binom{n}{\lfloor \frac{n}{2}\rfloor}.$$
\end{theorem}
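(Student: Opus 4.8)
The plan is to handle the two bounds separately and to reduce to a complete tree throughout. Since complementation $F\mapsto [n]\setminus F$ is a size-preserving bijection on $2^{[n]}$ that sends rank-preserving copies of a monotone decreasing tree to rank-preserving copies of its (monotone increasing) dual, I may assume $T$ is monotone increasing: it has a unique minimal element (root) of rank $1$, some rank-$2$ elements each above the root, and rank-$3$ elements each above a single rank-$2$ element. For $m,s$ large in terms of $|T|$, such a $T$ embeds rank-preservingly into the complete height-$3$ tree $T_{m,s}$ (a root, $m$ rank-$2$ elements above it, and $s$ rank-$3$ elements above each rank-$2$ element): map the rank-$i$ elements of $T$ to distinct rank-$i$ elements of $T_{m,s}$ respecting the child structure. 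Hence any rank-preserving copy of $T_{m,s}$ contains one of $T$, and it suffices to prove the upper bound for $T=T_{m,s}$, with the constants in $O_T(\cdot)$ depending only on $m,s$.

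The lower bound is immediate: take $\cF=\binom{[n]}{\lfloor n/2\rfloor}\cup\binom{[n]}{\lfloor n/2\rfloor+1}$. Along any $3$-chain the image sizes strictly increase, so a rank-preserving copy of a height-$3$ poset requires three distinct sizes; as $\cF$ uses only two, it is rank-preserving $T$-free, and $|\cF|=(2-O(1/n))\binom{n}{\lfloor n/2\rfloor}$. For the upper bound, let $\cF$ be rank-preserving $T_{m,s}$-free and set $f_k=|\cF\cap\binom{[n]}{k}|$ and $g_k=f_k/\binom{n}{k}$. First I discard the tails: for a large enough constant $C=C(m,s)$ the layers outside $W=\{k:|k-n/2|\le C\sqrt{n\log n}\}$ contain at most $o(\binom{n}{\lfloor n/2\rfloor})$ sets by the binomial tail estimate, which is negligible against the claimed error. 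Inside $W$ one has $\binom{n}{k}=(1\pm O_T(\sqrt{\log n/n}))\binom{n}{\lfloor n/2\rfloor}$, so $\sum_{k\in W}f_k\le(1+O_T(\sqrt{\log n/n}))\binom{n}{\lfloor n/2\rfloor}\sum_{k\in W}g_k$. Thus the theorem reduces to the Lubell-type bound $\sum_{k\in W}g_k\le 2+O_T(\sqrt{\log n/n})$; recall that $\sum_k g_k$ is exactly the expected number of members of $\cF$ met by a uniformly random maximal chain.

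The core step is this bound, and it is where rank-preserving $T_{m,s}$-freeness enters. The hypothesis translates into the local condition that for all sizes $a<b<c$, no $A\in\cF\cap\binom{[n]}{a}$ has as many as $m$ supersets $B\in\cF\cap\binom{[n]}{b}$ that are \emph{rich} for the pair $(b,c)$, where $B$ is rich if it lies below at least $ms$ members of $\cF\cap\binom{[n]}{c}$: given $m$ rich supersets of a common $A$, one greedily picks $s$ distinct rank-$3$ images above each (the surplus $ms$ guaranteeing distinctness), producing a copy of $T_{m,s}$. I would then argue that three layers of non-negligible density force such a configuration. The biregular containment graph between two layers, together with the normalized matching property of the Boolean lattice, guarantees that if $g_b$ and $g_c$ are bounded away from $0$ then almost every $b$-set of $\cF$ is rich; a further averaging over the $a$-subsets then produces an $A$ with $m$ rich supersets. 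Consequently at most two layers can carry substantial density, and a quantitative, supersaturated form of this dichotomy, summed over all triples inside $W$, should give $\sum_{k\in W}g_k\le 2+O_T(\sqrt{\log n/n})$.

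The main obstacle is precisely this quantitative core. The enemy is \emph{misalignment}: two layers can each be dense while their containment graph is spread so thinly that no single $a$-set accumulates $m$ rich neighbours, and this must be excluded uniformly over all $O((n\log n)^{3/2})$ triples of layers in $W$ while losing only $O_T(\sqrt{\log n/n})$ in total. Ruling it out requires the full strength of the Kruskal--Katona / normalized-matching regularity of the Boolean lattice to convert a density surplus into a localized high-branching vertex, together with careful bookkeeping of the $ms$-slack terms so that their cumulative contribution across the $O(\sqrt{n\log n})$ layers of the window stays within the target error.
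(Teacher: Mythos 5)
Your reductions are sound and in fact parallel the paper's: complementation to assume $T$ is monotone increasing, embedding $T$ rank-preservingly into a complete height-$3$ tree (the paper uses $T_{r,3}$, where every non-leaf has degree $r$), the two-middle-layers lower bound, the restriction to the window $|k-n/2|\le O(\sqrt{n\log n})$, and the reformulation as a Lubell-type bound $\sum_{k\in W} g_k\le 2+O_T(\sqrt{\log n/n})$. The translation of rank-preserving $T_{m,s}$-freeness into the ``rich superset'' condition (with the $ms$ slack making the greedy choice of leaves work) is also correct. But the proof stops exactly where the theorem starts: the quantitative core, which you yourself label ``the main obstacle,'' is never carried out, and the route you sketch for it cannot give the stated bound as formulated. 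The dichotomy ``at most two layers can carry substantial density'' is far too weak: if ``substantial'' means density at least $\eps$, the sub-threshold layers can still contribute up to $\eps|W|=\eps\cdot O(\sqrt{n\log n})$ to $\sum_{k\in W}g_k$, which swamps the target error $O(\sqrt{\log n/n})$ unless $\eps$ is taken of order $\log n/n$ --- and at that threshold the assertion that three non-negligible layers force the rich configuration is precisely the hard supersaturation statement you have not proved (your own ``misalignment'' enemy). Any argument organized over triples of layers must also control all $\Theta((n\log n)^{3/2})$ triples simultaneously with total loss $O(\sqrt{\log n/n})$; neither the normalized matching property nor Kruskal--Katona is developed in the proposal to a point where it yields such a bound.

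The paper's proof avoids layer densities altogether, and this is the idea you are missing. It partitions the maximal chains of $[n]$ according to the smallest member $G$ of $\cF$ on the chain, and further by the second smallest member $G'$, and then bounds for each part $\bC_{G,G'}$ the number of incidences $(F,\cC)$ with $F\in\cF\cap\cC$, $\cC\in\bC_{G,G'}$. The two lowest members account for the main term $2|\bC_G|$. For the remaining incidences, fix the size $k$ of $G'$ and ask for which sizes $j$ a given $G'$ has at least $r^2$ supersets of size $j$ in $\cF$: freeness implies that no single $j$ can be such a ``popular'' size for $r$ distinct sets $G'$ of size $k$ above the same $G$ (otherwise one greedily builds a rank-preserving $T_{r,3}$ rooted at $G$), so each $j$ is charged at most $r-1$ times; incidences at non-popular sizes are cheap because fewer than $r^2$ sets of that size sit above $G'$. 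This per-chain bookkeeping yields error terms $O(r^2/n)|\bC_G|$ and $O(r\sqrt{n\log n}/n)|\bC_G|$, hence the theorem, without ever comparing densities of layers. In short: your setup is right, but the heart of the argument is absent, and salvaging your density framing would require a genuinely new supersaturation lemma rather than the cited classical tools.
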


The lower bounds in Theorem \ref{thm:height2} and Theorem \ref{thm:height3} follow simply by taking one and two middle layers of the Boolean lattice of order $n$, respectively.

\subsubsection*{An exact result}

The \textit{dual} of a poset $P$ is the poset $P'$ on the same set with the partial order relation of $P$ replaced by its inverse,  i.e., $x \le y$ holds in $P$ if and only if $y \le x$ holds in $P'$. Let $Y_{h,s}$ denote the poset on $h+s$ elements $x_1,\dots,x_h,y_1,\dots,y_s$ with $x_1<\dots<x_h<y_1,\dots,y_s$ and let $Y'_{h,s}$ denote the dual of $Y_{h,s}$. Let $\Sigma(n,h)$ for the number of elements on the $h$ middle layers of the Boolean lattice of order $n$, so $\Sigma(n,h)= \sum_{i=1}^{h} \binom{n}{\lfloor \frac{n-h}{2} \rfloor +i}$.

Investigation on $La(n,Y_{h,s})$ was started by Thanh in \cite{T1998}, where asymptotic results were obtained. Thanh also gave a construction showing that $La(n,Y_{h,s}) > \Sigma(n,h)$, from which it easily follows that $La(n,Y'_{h,s}) > \Sigma(n,h)$ as well. Interestingly,  De Bonis and Katona \cite{DKS2005} showed that if both $Y_{2,2}$ and $Y'_{2,2}$ are forbidden, then an exact result can be obtained: $La(n,\{Y_{2,2},Y'_{2,2}\})=\Sigma(n,2)$. Later this was extended by Methuku and Tompkins \cite{MT2015}, who proved $La(n,\{Y_{k,2},Y'_{k,2}\})=\Sigma(n,k)$, and   $La^*(n,\{Y_{2,2},Y'_{2,2}\})=\Sigma(n,2)$. Very recently, Martin, Methuku, Uzzell and Walker \cite{MMSUW2017} and independently, Tompkins and Wang \cite{TompkinsWang} showed that $La^*(n,\{Y_{k,2},Y'_{k,2}\})=\Sigma(n,k)$. We prove the following theorem which extends all of these previous results and proves a conjecture of \cite{MMSUW2017}.


\begin{theorem}\label{thm:reversed}
For any pair $s,h\ge 2$ of positive integers, there exists $n_0=n_0(h,s)$ such that for any $n\ge n_0$ we have $$La_{rp}(n,\{Y_{h,s},Y'_{h,s}\})=\Sigma(n,h).$$
\end{theorem}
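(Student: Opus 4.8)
Both $Y_{h,s}$ and $Y'_{h,s}$ contain the chain $x_1<\dots<x_h<y_1$ on $h+1$ elements, so every copy of either poset (in particular every rank-preserving one) contains a chain of $h+1$ sets. The union of the $h$ middle layers of $2^{[n]}$ contains no such chain and has size exactly $\Sigma(n,h)$; hence $La_{rp}(n,\{Y_{h,s},Y'_{h,s}\})\ge\Sigma(n,h)$, and it remains to prove the matching upper bound: any rank-preserving $\{Y_{h,s},Y'_{h,s}\}$-free family $\cF\subseteq 2^{[n]}$ has $|\cF|\le\Sigma(n,h)$ once $n\ge n_0(h,s)$.

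\textbf{Reduction to a per-chain inequality.} Fix a symmetric chain decomposition $\cC_1,\dots,\cC_N$ of $2^{[n]}$, where $N=\binom{n}{\lfloor n/2\rfloor}$, and let $\ell_i$ be the number of sets in $\cC_i$. The family of $h$ middle layers meets each $\cC_i$ in exactly $\min(\ell_i,h)$ sets (a symmetric chain of length at least $h$ crosses all $h$ middle layers, while a shorter one lies entirely inside them), so $\sum_i\min(\ell_i,h)=\Sigma(n,h)$. Since $|\cF|=\sum_i|\cF\cap\cC_i|$, it suffices to prove
\[
\sum_{i=1}^N\big(|\cF\cap\cC_i|-\min(\ell_i,h)\big)\le 0 .
\]
Call $\cC_i$ an \emph{excess chain} if $|\cF\cap\cC_i|>\min(\ell_i,h)$; such a chain is long ($\ell_i\ge h+1$) and the sets of $\cF$ on it form a chain of more than $h$ sets. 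If $\cF$ has no chain of $h+1$ sets there are no excess chains and we recover the theorem of Erd\H{o}s \cite{erdos}; the whole difficulty is to show that the total excess is cancelled by the deficiency of the chains meeting $\cF$ in fewer than $\min(\ell_i,h)$ sets.

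\textbf{Using both forbidden posets.} The two rank-preserving conditions control the excess from the two ends. Avoiding a rank-preserving $Y_{h,s}$ means precisely that there is no chain $A_1\subsetneq\dots\subsetneq A_h$ in $\cF$ together with $s$ distinct sets of a common size in $\cF$ each strictly containing $A_h$; avoiding $Y'_{h,s}$ is the dual statement, with $s$ sets of a common size strictly below the bottom of an $h$-chain. That both posets are genuinely needed is shown by Thanh's construction, which already gives $La(n,Y_{h,s})>\Sigma(n,h)$ when only one of them is forbidden. The plan is to charge the sets of $\cF$ lying strictly above the top middle layer against the upward branching forbidden by $Y_{h,s}$, and those lying strictly below the bottom middle layer against $Y'_{h,s}$: one argues that for $n\ge n_0(h,s)$, if $\cF$ carried more than $\Sigma(n,h)$ sets, then after restricting to the levels where the excess concentrates, some $h$-chain of $\cF$ would have $s$ sets of $\cF$ of a single size strictly above it (or, dually, below it), extracted by a pigeonhole argument among the many comparabilities forced between the excess sets and the long chains carrying them.

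\textbf{Main obstacle.} The essential difficulty is that a symmetric chain is a vertex-disjoint object, whereas a forbidden copy of $Y_{h,s}$ must be assembled across several chains: it needs $s$ sets of one common size sitting above a single $h$-chain of $\cF$. Converting ``globally many excess sets'' into ``$s$ same-sized sets above one common $h$-chain'' is the crux, and it is here that the exactness of the bound (rather than an $O(\cdot)$ error term) and the hypothesis $n\ge n_0(h,s)$ enter: the bookkeeping must be tight enough that the excess is matched exactly, not merely up to lower-order terms, and it must treat the upward ($Y_{h,s}$) and downward ($Y'_{h,s}$) contributions simultaneously so that the two charging schemes meet consistently in the middle band.
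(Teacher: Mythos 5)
Your lower bound and your reduction via a symmetric chain decomposition are both correct (indeed $\sum_i\min(\ell_i,h)=\Sigma(n,h)$, so the theorem is equivalent to the per-chain excess inequality you state). But the proof stops there: the two paragraphs that follow (``Using both forbidden posets'' and ``Main obstacle'') only describe a \emph{plan} for a charging/pigeonhole argument and then explicitly concede that its crux --- converting ``globally many excess sets'' into $s$ sets of one common size above a single $h$-chain of $\cF$ --- is unresolved. That conversion \emph{is} the theorem; everything you actually prove (lower bound, SCD bookkeeping, the restatement of what rank-preserving $\{Y_{h,s},Y'_{h,s}\}$-freeness means) is the routine part. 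As written, the upper bound $|\cF|\le\Sigma(n,h)$ is not established for any family with chains longer than $h$, so there is a genuine gap, and it is precisely the cross-chain assembly problem you yourself flag: a symmetric chain decomposition is a vertex-disjoint structure, and nothing in your text forces the excess sets scattered over many chains to line up above (or below) one common $h$-chain.

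For comparison, the paper sidesteps this obstacle by not using a disjoint chain decomposition at all. It puts the weight $w(F)=\binom{n}{|F|}$ on each $F\in\cF$, observes that the average of $w(\cC)=\sum_{F\in\cC\cap\cF}w(F)$ over all $n!$ maximal chains equals $|\cF|$, and partitions the maximal chains according to which $h-1$ sets $A_1\subset\dots\subset A_{h-1}$ of $\cG=\{F\in\cF\mid\exists P,Q\in\cF\setminus\{F\},\,P\subset F\subset Q\}$ occur lowest on the chain. Inside one part $\bC(A_1,\dots,A_{h-1})$ the forbidden configurations become \emph{local}: since $A_1,A_{h-1}\in\cG$, there exist $P\subset A_1$ and $Q\supset A_{h-1}$ in $\cF$, so rank-preserving $\{Y_{h,s},Y'_{h,s}\}$-freeness forces at most $s-1$ supersets of $A_{h-1}$ of any one size and at most $s-1$ subsets of $A_1$ of any one size. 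A computation with the weights then shows these contribute only $O_s(1/n)\binom{n}{\lfloor n/2\rfloor}$ per part, which for large $n$ is absorbed by $\Sigma(n,h)-\Sigma(n,h-1)$, giving average weight at most $\Sigma(n,h)$ on every part. This choice of decomposition is exactly what makes the pigeonhole you are missing unnecessary; note also the paper's closing remark that the unweighted Lubell bound $\sum_{F\in\cF}\binom{n}{|F|}^{-1}\le h$ is simply false here, which is a warning that any counting scheme for this problem must keep track of actual set sizes rather than normalized counts.
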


The lower bound trivially follows by taking $h$ middle layers of the Boolean lattice of order $n$. (Note that adding any extra set creates a rank-preserving copy of either $Y_{h,s}$ or $Y'_{h,s}$.) Moreover, any rank-preserving copy of $Y_{h,s}$ (respectively $Y'_{h,s}$) is also an induced copy of $Y_{h,s}$ (respectively $Y'_{h,s}$). Therefore, Theorem \ref{thm:reversed} implies that $La^*(n,\{Y_{h,s},Y'_{h,s}\}) = La(n,\{Y_{h,s},Y'_{h,s}\}) = \Sigma(n,h)$.

\begin{remark}
One wonders if the condition $h\ge 2$ is necessary in Theorem \ref{thm:reversed}. Katona and Tarj\'an \cite{KatTar} proved that $La(n, \{Y_{1,2},Y'_{1,2}\})=\binom{n}{n/2}$ if $n$ is even and 
$La(n, \{Y_{1,2},Y'_{1,2}\})=2\binom{n-1}{(n-1)/2}>\binom{n}{n/2}$ if $n$ is odd. The following construction shows that no matter how little we weaken the condition of being $\{Y_{1,2},Y'_{1,2}\}$-free, there are families strictly larger than $\binom{n}{n/2}$ even in the case $n$ is even. Let us define
\[
\cF_{2,3}=\left\{F\in \binom{[n]}{n/2+1}:n-1,n\in F\right\}\cup\left\{F\in \binom{[n]}{n/2}: |F\cap \{n-1,n\}|\le 1\right\}.
\]
Observe that $\cF_{2,3}$ is $\{Y_{1,2},Y'_{1,3}\}$-free and its size is $\binom{n-2}{n/2+1}+(\binom{n}{n/2}-\binom{n-2}{n/2-2})>\binom{n}{n/2}$.

\end{remark}

\section{Proofs} 
Using Chernoff's inequality, it is easy to show (see for example \cite{griggslu}) that the number of sets $F \subset [n]$ of size more than $n/2 + 2\sqrt{n\log n}$ or smaller than $n/2 - 2 \sqrt{n \log n}$ is at most 
\begin{equation}
\label{eq:reduction}
O\left(\frac{1}{n^{3/2}}\binom{n}{n/2}\right).
\end{equation}
Thus in order to prove \tref{height2} and \tref{height3}, we can assume the family only contains sets of size more than $n/2 - 2\sqrt{n\log n}$ and smaller than $n/2 + 2 \sqrt{n \log n}$.

\subsection{Proof of \tref{height2}: Trees of height two}
The proof of \tref{height2} follows the lines of a reasoning of Bukh's \cite{Bukh}. The new idea is that we count the number of related pairs between two fixed levels as detailed in the proof below.

Let $\mathcal F$ be a $T$-free family of subsets of $[n]$ and let the number of elements in $T$ be $t$. Using \eqref{eq:reduction}, we can assume $\mathcal F$
only contains sets of sizes in the range $[n/2 - 2\sqrt{n\log n}, n/2 + 2\sqrt{n\log n}]$. A pair of sets $A, B \in \cF$ with $A \subset B$ is called a $2$-chain in $\cF$. It is known by a result of Kleitman \cite{Kleitman} that the number of $2$-chains in $\mathcal F$ is at least 
\begin{equation}
\label{lower}
\left (\abs{\cF} - \binom{n}{\lfloor \frac{n}{2} \rfloor} \right)\frac{n}{2}.
\end{equation}

For any $n/2 - 2\sqrt{n\log n} \le i \le n/2 + 2\sqrt{n\log n}$, let $\cF_i := \cF \cap \binom{[n]}{i}$. 
\begin{claim}
\label{2chains}
For any $i < j$, the number of $2$-chains $A \subset B$ with $A \in \cF_i$ and $B \in \cF_j$ is at most $(t-2)(\abs{\cF_i}+ \abs{\cF_j})$.
\end{claim}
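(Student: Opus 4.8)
The plan is to read the $2$-chains between the two fixed levels as a bipartite graph and to show that too many edges would force a rank-preserving copy of $T$. Fix $i<j$ and let $G$ be the bipartite graph with vertex classes $\cF_i$ and $\cF_j$, where $A\in\cF_i$ and $B\in\cF_j$ are joined by an edge exactly when $A\subset B$. Then the number of $2$-chains we wish to bound is precisely $e(G)$, and $|V(G)|=|\cF_i|+|\cF_j|$. The key observation is that, since $T$ has height $2$, its only comparabilities run between its minimal (rank-$0$) and maximal (rank-$1$) elements, and these comparabilities are exactly the edges of the Hasse tree. Consequently a rank-preserving copy of $T$ with the minimal elements mapped to sets of size $i$ and the maximal elements to sets of size $j$ is nothing but a subgraph copy of the tree $T$ in $G$ in which the minimal side lands in $\cF_i$ and the maximal side in $\cF_j$. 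So it suffices to prove that if $e(G)>(t-2)|V(G)|$ then $G$ contains such a copy of $T$; the contrapositive is exactly the claim.

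First I would pass to a subgraph of large minimum degree by the standard peeling argument: repeatedly delete a vertex of degree at most $t-2$, each deletion destroying at most $t-2$ edges. Were the whole graph deletable we would have removed at most $(t-2)|V(G)|$ edges in total; since $e(G)>(t-2)|V(G)|$ this is impossible, so the process halts at a nonempty subgraph $G'$ of minimum degree at least $t-1$. This $G'$ remains bipartite with classes inside $\cF_i$ and $\cF_j$, and both classes are nonempty because $G'$ has at least one edge.

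Next I would embed $T$ greedily into $G'$. As $T$ is a tree it is connected, so I root it at a minimal element and process its vertices $v_1,v_2,\dots,v_t$ in breadth-first order, placing $v_1$ at any vertex of $G'$ lying in $\cF_i$. When it is the turn of $v_m$ (for $2\le m\le t$), its unique already-embedded neighbour is its parent $v_p$, and the image $\phi(v_p)$ has at least $t-1$ neighbours in $G'$. Among the $m-1$ previously placed images, $\phi(v_p)$ itself is not one of its own neighbours, so at most $m-2\le t-2$ of the used vertices are neighbours of $\phi(v_p)$; hence an unused neighbour remains, and we take $\phi(v_m)$ to be such a vertex. Because $G'$ is bipartite and $T$ is connected, the vertices of $T$ at even distance from the root all land in $\cF_i$ and those at odd distance all land in $\cF_j$; as $T$ has height $2$ these are exactly the minimal and the maximal elements. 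Thus the embedding is rank-preserving with minimal elements of size $i$ and maximal elements of size $j$, and every edge of $T$ becomes a genuine containment $A\subset B$ with $|A|=i<j=|B|$.

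This yields a rank-preserving copy of $T$ in $\cF$, contradicting that $\cF$ is rank-preserving $T$-free; therefore $e(G)\le(t-2)|V(G)|=(t-2)(|\cF_i|+|\cF_j|)$, which is the claim. The only point demanding care is the bookkeeping in the greedy step, namely verifying that minimum degree exactly $t-1$ is enough to embed a $t$-vertex tree and that rooting at a minimal element forces the smaller sets onto the $\cF_i$ side, so the containments point the right way; the extremal tools themselves (the minimum-degree peeling lemma and the greedy tree embedding) are entirely standard.
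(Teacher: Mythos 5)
Your proof is correct and follows essentially the same route as the paper's: build the auxiliary bipartite graph on $\cF_i\cup\cF_j$, pass to a subgraph of minimum degree at least $t-1$, and greedily embed the $t$-vertex tree. The only difference is that you spell out the details the paper leaves implicit (the peeling argument, the BFS embedding, and the check that bipartiteness plus rooting at a minimal element forces the containments to point the right way, making the copy rank-preserving), which is exactly the bookkeeping the paper's ``one can easily find\dots'' glosses over.
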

\begin{proof}
Suppose otherwise, and construct an auxiliary graph $G$ whose vertices are elements of $\cF_i$ and $\cF_j$, and two vertices form an edge of $G$ if the corresponding elements form a $2$-chain. This implies that $G$ contains more than $(t-2)(\abs{\cF_i}+ \abs{\cF_j})$ edges, so it has average degree more than $2(t-2)$. One can easily find a subgraph $G'$ of $G$ with minimum degree at least $t-1$, into which we can greedily embed any tree with $t$ vertices. So in particular, we can find $T$ in $G'$ which corresponds to a rank-preserving copy of $T$ into $\cF$, a contradiction.
\end{proof}

Claim \ref{2chains} implies that the total number of $2$-chains in $\cF$ is at most $$\sum_{n/2 - 2\sqrt{n\log n} \le i < j \le n/2 + 2\sqrt{n\log n}} (t-2)(\abs{\cF_i}+ \abs{\cF_j}) = (t-2)(4\sqrt{n\log n})\abs{\cF}.$$

Combining this with \eqref{lower}, and simplifying we get 
$$ \abs{\cF} \left(1 - 8(t-2)\sqrt{\frac{\log n}{n}}\right) \le \binom{n}{\lfloor \frac{n}{2} \rfloor}.$$
Rearranging, we get
$$ \abs{\cF}  \le \binom{n}{\lfloor \frac{n}{2} \rfloor} \left(1 + O_T\left (\sqrt{\frac{\log n}{n}}\right)\right)$$
as desired. 

\qed

\subsection{Proof of  \tref{height3}: Monotone trees of height three}

First note that it is enough to prove the statement for $T=T_{r,3}$ the monotone increasing tree poset of height tree where all elements, except its leaves (i.e., its elements on the top level) have degree $r$. Let $\cF\subseteq 2^{[n]}$ be a family of sets which does not contain any rank-preserving copies of $T_{r,3}$. Using \eqref{eq:reduction} we can assume that for any set $F\in \cF$ we have $|F-n/2|\le 2 \sqrt{n \log n}$.


We will prove that for such a family, \begin{equation}\label{bumm}
\sum_{F\in \cF}|F|!(n-|F|)!\le (2+O_r(1/n))n!
\end{equation} holds. This is enough as dividing by $n!$ yields
\[
\frac{|\cF|}{\binom{n}{\lfloor n/2\rfloor}}\le \sum_{F\in \cF}\frac{1}{\binom{n}{|F|}}\le (2+O_r(1/n))
\]
and hence the statement of the theorem will follow.

Observe that $\sum_{F\in \cF}|F|!(n-|F|)!$ is the number of pairs $(F,\cC)$ where $F\in \cF\cap \cC$ and $\cC$ is a maximal chain in $[n]$. We will use the chain partitioning method introduced in \cite{partition}. For any $G\in \cF$ we define $\mathbf{C}_G$ to be the set of maximal chains $\cC$ in $[n]$ such that the smallest set of $\cC\cap \cF$ is $G$. 

To prove (\ref{bumm}) it is enough to show that for any fixed $G\in \cF$ the number of pairs $(F,\cC)$ with $F\in \cF\cap \cC$, $\cC\in \bC_G$ is at most $(2+O_r(1/n))|\bC_G|$. We count the number of these pairs $(F,\cC)$ in three parts. 

Firstly, the number of pairs where either $F=G$ or $F$ is the second smallest element of $\cF\cap \cC$ is at most $2|\bC_G|$ (there might be chains in $\bC_G$ with $\cC\cap \cF=\{G\}$).

Let us consider the following sub-partition of $\bC_G$. For any $G\subsetneq G'\in \cF$ let $\bC_{G,G'}$ denote the set of maximal chains $\cC$ such that $G$ and $G'$ are the smallest and second smallest sets in $\cF\cap \cC$, respectively. Observe that $|\bC_{G,G'}|=m_G\cdot m_{G,G'}\cdot (n-|G'|)!$, where $m_G$ is the number of chains from $\emptyset$ to $G$ that do not contain any other sets from $\cF$ and $m_{G,G'}$ is the number of chains from $G$ to $G'$ that do not contain any other sets from $\cF$.

Secondly, let us now count the pairs $(F,\cC)$ such that $F\in \cF\cap \cC$, $\cC\in \bC_{G,G'}$ and there are less than $r^2$ sets $F'\in \cF$ with $|F'|=|F|$, $G'\subsetneq F'$. To this end, let us fix $G'$ and count such pairs $(F,\cC)$. All sets in $\cF$ have size at most $n/2+2\sqrt{n\log n}$ and at least $n/2-2\sqrt{n\log n}$, so $|G'|\ge n/2-2\sqrt{n\log n}$. For a set $F\supsetneq G'$ the number of chains in $\bC_{G,G'}$ that contain $F$ is $m_Gm_{G,G'}\cdot (|F|-|G'|)!(n-|F|)!$, thus we obtain that the number of such pairs is at most
\[
\sum_{i=1}^{4\sqrt{n\log n}}r^2m_Gm_{G,G'}\cdot i!(n-|G'|-i)!\le 2r^2m_Gm_{G,G'}(n-|G'|-1)!=\frac{2r^2}{n-|G'|}|\bC_{G,G'}|\le \frac{5r^2}{n}|\bC_{G,G'}|.
\]
Summing this for all $G'$ we obtain that the total number of such pairs $(F,\cC)$ of this second type is at most $\frac{5r^2}{n}|\bC_{G}|$.

Finally, let us count the pairs $(F, \cC)$ with $F\in \cC\cap \cF$, $\cC\in \bC_{G,G'}$ and there are at least $r^2$ many sets $F'\in\cF$ with $G'\subsetneq F'$, $|F'|=|F|$. To this end we group some of the $\bC_{G,G'}$'s together. Let $$\bC_{G,k}:=\cup_{G': |G'|=k}\bC_{G,G'},\hskip 0.3truecm \cF_{G,k}:=\{G'\in \cF:G\subseteq G', |G'|=k\}$$ and let us introduce the function $f_{G,k}:\cF_{G,k} \rightarrow [n]$ by $$f_{G,k}(G'):=\{j: \exists F_1,F_2,\dots,F_{r^2}, \,\textnormal{such that}\, G'\subseteq F_i, |F_i|=j ~\text{for all}\ i=1,2,\dots, r^2\}.$$
Observe that for any distinct $G_1',G_2',\dots, G_r'\in \cF_{G,k}$ we have $\cap_{i=1}^kf_{G,k}(G_i')=\emptyset$. Indeed, if $j\in \cap_{i=1}^kf_{G,k}(G_i')=\emptyset$, then one could extend $G,G_1',G_2',\dots, G_r'$ to a rank-preserving copy of $T_{r,3}$ such that all sets corresponding to leaves of $T_{r,3}$ are of size $j$.

Note that by the assumption on the set sizes of $\cF$, the function $f_{G,k}$ maps to $[n/2-2\sqrt{n\log n},n/2+2\sqrt{n\log n}]$, so its range has size at most $4 \sqrt{n\log n}$. As every maximal chain contains exactly one set of size $j$ (not necessarily contained in $\cF$), we obtain that the number of pairs $(F,\cC)$ with $F\in \cF\cap \cC$, $\cC\in \bC_{G,k}$ is at most 
\begin{equation}\label{bummbumm}
m_G\cdot 4\sqrt{n\log n}(r-1)(k-|G|)!(n-k)!.
\end{equation}
Indeed, if the size $j$ of $F$ is fixed, then $j$ belongs to $f_{G,k}(G')$ for at most $r-1$ sets $G'\in \cF_{G,k}$, so for this particular $j$ the number of pairs is at most $m_G\cdot (r-1)(k-|G|)!(n-k)!$.

Summing up (\ref{bummbumm}) for all $k>|G|$ we obtain that the number of pairs $(F,\cC)$ of this third type is at most 
\begin{equation*}
\begin{split}
\sum_{k=|G|+1}^{n/2+2\sqrt{n\log n}}m_G\cdot 4\sqrt{n\log n}(r-1)(k-|G|)!(n-k)! & \le \frac{8(r-1)\sqrt{n\log n}}{n-|G|}m_G(n-|G|)!\\
& \le \frac{17(r-1) \sqrt{n\log n}}{n}|\bC_G|.
\end{split}
\end{equation*}
 Adding up the estimates on the number of pairs $(F,\cC)$ of these 3 types, completes the proof.
\qed

\subsection{Proof of \tref{reversed}: $\{Y_{h,s}, Y'_{h,s}\}$-free families}

Let $\cF\subset 2^{[n]}$ be a family not containing a rank-preserving copy of $Y_{h,s}$ or $Y'_{h,s}$. First, we will introduce a weight function. For every $F\in\cF$, let $w(F)=\binom{n}{|F|}$. For a maximal chain $\cC$, let $w(\cC)=\sum_{F\in \cC\cap \cF}w(F)$ denote the weight of $\cC$. Let $\bC_n$ denote the set of maximal chains in $[n]$. Then
$$\frac{1}{n!}\sum_{\cC\in\bC_n} w(\cC)=\frac{1}{n!}\sum_{\cC\in\bC_n}\sum_{F\in \cC\cap \cF}w(F)=\frac{1}{n!}\sum_{F\in \cF} |F|!(n-|F|)!w(F)=|\cF|.$$

This means that the average of the weight of the full chains equals the size of $\cF$. Therefore it is enough to find an upper bound on this average. We will partition $\bC_n$ into some parts and show that the average weight of the chains is at most $\Sigma(n,h)$ in each of the parts. Therefore this average is also at most $\Sigma(n,h)$, when calculated over all maximal chains, which gives us $|\cF|\le \Sigma(n,h)$.

Let $\cG=\{F\in\cF~|~\exists P,Q\in\cF\backslash\{F\},~P\subset F\subset Q\}$. Let $A_1\subset A_2\subset\dots \subset A_{h-1}$ be $h-1$ different sets of $\cG$. Then we define $\bC(A_1, A_2,\dots ,A_{h-1})$ as the set of those chains that contain all of ${A_1, A_2,\dots A_{h-1}}$ and these are the $h-1$ smallest elements of $\cG$ in them. We also define $\bC_-$ as the set of those chains that contain at most $h-2$ elements of $\cG$. Then the sets of the form $\bC(A_1, A_2,\dots A_{h-1})$ together with $\bC_-$ are pairwise disjoint and their union is $\bC_n$.

Now we will show the average weight within each of these sets of chains is at most $\Sigma(n,h)$. This is easy to see for $\bC_-$. If $\cC\in \bC_-$, then $|\cC\cap\cF|\le h$, since every element of $\cF\cap \cC$ except for the smallest and the greatest must be in $\cG$. Therefore $c(W)\le\Sigma(n,h)$ for every $\cC\in \bC_-$, which trivially implies
$$\sum_{\cC\in \bC_-}w(\cC)\le |\bC_-|\Sigma(n,h).$$

Now consider some sets $A_1\subset A_2\subset\dots \subset A_{h-1}$ in $\cG$ such that $\bC(A_1, A_2,\dots A_{h-1})$ is non-empty. We will use the notations $\bC(A_1, A_2,\dots ,A_{h-1})=\cQ$, $|A_1|=\ell_1$ and $n-|A_{h-1}|=\ell_2$  for simplicity. Note that the chains in $\cQ$ do not contain any member of $\cF$ of size between $|A_1|$ and $|A_{h-1}|$ other than the sets $A_2, A_3 \dots A_{h-2}$. Such a set would be in $\cG$ (since it contains $A_1$ and is contained in $A_{h-1}$), therefore its existence would contradict the minimality of $\{A_1, A_2, \dots ,A_{h-1}\}$. The chains in $\cQ$ must also avoid all subsets of $A_1$ that are in $\cG$ for the same reason.

Let $N_1$ denote the number of chains between $\emptyset$ and $A_1$ that avoid the elements of $\cG$ (except for $A_1$). Let $N_2$ denote the number of chains between $A_1$ and $A_{h-1}$ that contain the sets $A_2, A_3, \dots ,A_{h-2}$, but no other element of $\cF$. Then $|\cQ|=N_1 N_2 \ell_2!$.

Now we will investigate how much the sets of certain sizes can contribute to the sum
\begin{equation}\label{somesumhere}
\sum_{\cC\in \cQ}w(\cC).
\end{equation}

The sets $A_1, A_2, \dots A_{h-1}$ appear in all chains of $\cQ$, so their contribution to the sum is
$$|\cQ|\sum_{i=1}^{h-1} w(A_i)=|\cQ|\sum_{i=1}^{h-1} \binom{n}{|A_i|}\le|\cQ|\Sigma(n,h-1).$$

We have already seen that there are no other sets of $\cF$ in these chains with a size between $|A_1|$ and $|A_{h-1}|$.

If $\ell_1<\frac{n}{2}-2\sqrt{n\log n}$, then (by \eqref{eq:reduction}) the contribution coming from the subsets of $A_1$ is trivially at most $$|\cQ|\sum_{i=0}^{\ell_1-1}\binom{n}{i}=|\cQ|O\left(\binom{n}{n/2}\frac{1}{n^{3/2}}\right).$$
The contribution coming from supersets of $A_{h-1}$ is similarly small if $\ell_2<\frac{n}{2}-2\sqrt{n\log n}$.
From now on we consider the cases when $\ell_1\ge \frac{n}{2}-2\sqrt{n\log n}$ and $\ell_2\ge \frac{n}{2}-2\sqrt{n\log n}$.

There are no $s$ supersets of $A_{h-1}$ of equal size in $\cF$, since these would form a rank-preserving copy of $Y_{h,s}$ together with the sets $A_1, A_2, \dots A_{h-1}$ and some set $P\in\cF$, $P\subset A_1$. (Such a set exists, since $A_1\in\cG$.)

A superset of $A_{h-1}$ of size $n-i$ appears in $|\cQ|\binom{\ell_2}{i}^{-1}$ chains of $\cQ$. Therefore the total contribution to the sum (\ref{somesumhere}) by supersets of $A_{h-1}$ is at most
\begin{equation*}
\begin{split}
|\cQ|w([n])+\sum_{i=1}^{\ell_2-1} |\cQ|\binom{\ell_2}{i}^{-1}(s-1)\binom{n}{n-i} & \le
|\cQ|+|\cQ|(s-1)\binom{n}{\lfloor \frac{n}{2} \rfloor}\sum_{i=1}^{\ell_2-1} \binom{\ell_2}{i}^{-1}\\
& =
|\cQ|\binom{n}{\lfloor \frac{n}{2} \rfloor}O_s\left(\frac{1}{n}\right).
\end{split}
\end{equation*}

There are no $s$ subsets of $A_1$ of equal size in $\cF$, since these would form a rank-preserving copy of $Y'_{h,s}$ together with the sets $A_1, A_2, \dots A_{h-1}$ and some set $Q\in\cF$, $A_{h-1}\subset Q$. (Such a set exists, since $A_{h-1}\in\cG$.)

A subset of $A_1$ of size $i$ appears in at most $\binom{\ell_1}{i}^{-1}\ell_1!N_2\ell_2!$ chains of $\cQ$. Therefore the total contribution to the sum (\ref{somesumhere}) by subsets of $A_1$ is at most
\begin{equation}\label{egyenlet}
\begin{split}
\ell_1!N_2\ell_2!w(\emptyset)+\sum_{i=1}^{\ell_1-1} \binom{\ell_1}{i}^{-1}\ell_1!N_2\ell_2!(s-1)\binom{n}{i} & \le
\ell_1!N_2\ell_2!+\ell_1!N_2\ell_2!(s-1)\binom{n}{\lfloor \frac{n}{2} \rfloor}\sum_{i=1}^{\ell_1-1} \binom{\ell_1}{i}^{-1}\\
& =
\ell_1!N_2\ell_2!\binom{n}{\lfloor \frac{n}{2} \rfloor}O_s\left(\frac{1}{n}\right).
\end{split}
\end{equation}

We will show that if $n$ is large and $\ell_1\ge \frac{n}{2}-2\sqrt{n\log n}$ then most chains between $\emptyset$ and $A_1$ avoid the elements of $\cG$, therefore $N_1$ is close to $\ell_1!$. There are at most $s-1$ sets of $\cG$ on any level (otherwise a rank-preserving copy of $Y'_{h,s}$ would be formed), and $\emptyset\not\in\cG$. There are $\ell_1!\binom{\ell_1}{i}^{-1}$ chains between $\emptyset$ and $A_1$ containing a set of size $i$. Therefore
$$\ell_1!-N_1\le (s-1)\sum_{i=1}^{\ell_1-1}\ell_1!\binom{\ell_1}{i}^{-1}=\ell_1! O\left(\frac{1}{n}\right).$$
This means that for large enough $n$, we have $\ell_1!\le 2N_1$.
Then (\ref{egyenlet}) can be continued as
$$\ell_1!N_2\ell_2!\binom{n}{\lfloor \frac{n}{2} \rfloor}O_s\left(\frac{1}{n}\right)\le 2N_1N_2\ell_2!\binom{n}{\lfloor \frac{n}{2} \rfloor}O_s\left(\frac{1}{n}\right)=|\cQ|\binom{n}{\lfloor \frac{n}{2} \rfloor}O_s\left(\frac{1}{n}\right).$$

To summarize, we found that the contribution to the sum $(\ref{somesumhere})$ from the subsets of $A_1$ and the supersets of $A_{h-1}$ is at most
$$|\cQ|\binom{n}{\lfloor \frac{n}{2} \rfloor}O_s\left(\frac{1}{n}\right).$$
For large enough $n$ this is smaller than $|\cQ|\left(\Sigma(n,h)-\Sigma(n,h-1)\right)$, which means that
$$\sum_{\cC\in \cQ}w(\cC)\le |\cQ|\Sigma(n,h).$$
This completes the proof.
\qed

\begin{remark}
We had to use a weighting technique in the above proof because the usual Lubell method (proving that $\sum_{F\in\cF}\binom{n}{|F|}^{-1}\le h$, and deducing $|\cF|\le\Sigma(n,h)$ from that) does not work for this problem. To see this, let $h\ge 3$, $n\ge 2h$ and consider the following set system:
$$\cF=\{F\in [n] ~|~ |F|\le h-2~{\rm or}~|F|\ge n-h+2\}.$$
For $s\ge 2^{h-2}$ this set system is $Y_{h,s}$-free and $Y'_{h,s}$-free (even in the original sense, not necessarily in the rank-preserving sense). However, we have $\sum_{F\in\cF}\binom{n}{|F|}^{-1}= 2(h-1)>h$.
\end{remark}

\bibliographystyle{acm}
\bibliography{biblio}
\end{document}